\numberwithin{equation}{section}
\numberwithin{figure}{section}
\renewcommand{\[}{\begin{equation}}
\renewcommand{\]}{\end{equation}} 
\numberwithin{equation}{section}
\theoremstyle{plain}
\newtheorem{thm}{Theorem}[section]
  \theoremstyle{remark}
  \newtheorem{rem}[thm]{Remark}
  \theoremstyle{plain}
  \newtheorem*{thm*}{Theorem}
  \theoremstyle{plain}
  \newtheorem{prop}[thm]{Proposition}
  \theoremstyle{plain}
  \newtheorem{lem}[thm]{Lemma}
  \theoremstyle{plain}
  \newtheorem{cor}[thm]{Corollary}
\begin{document}

\title{Diffusive decay of the environment viewed by the particle}

\date{\today}

\author[P. de Buyer]{Paul de Buyer}
\address[P. de Buyer]{Université Paris Ouest Nanterre La Défense - Modal'X, 200 avenue
de la République 92000 Nanterre, France}
\email{debuyer@math.cnrs.fr}

\author[J.-C. Mourrat]{Jean-Christophe Mourrat}
\address[J.-C. Mourrat]{ENS Lyon, CNRS, 46 all\'ee d'Italie, 69007 Lyon, France}
\email{jean-christophe.mourrat@ens-lyon.fr}

\keywords{Random Walks in Random Environments, Algebraic convergence to equilibrium,
Homogenization, Functional inequalities}

\subjclass{60K37, 60J27, 76M50}

\maketitle

\begin{abstract}
We prove an optimal diffusive decay of the environment viewed by the
particle in random walk among random independent conductances, with,
as a main assumption, finite second moment of the conductance. Our
proof, using the analytic approach of Gloria, Neukamm and Otto, is
very short and elementary. 
\end{abstract}

\section{Introduction}

In this paper we are interested in the speed of convergence of the
environment viewed by the particle in the context of random walk among
random independent conductances. We refer to \cite{biskup2011recent}
for an introduction. We will first introduce the model and then state
our main theorem.

Consider the $d$-dimensional lattice $(\mathbb{Z}^{d},\mathbb{B}^{d})$,
$d\geq1$, with $\mathbb{B}^{d}$ the set of unoriented edges connecting
any two points of $\mathbb{Z}^{d}$ at Euclidian distance 1 and set
$\Omega:=[0,+\infty)^{\mathbb{B}^{d}}$. Given an \emph{environment}
$\omega=\left(\omega_{x,y}\right)_{\{x,y\}\in\mathbb{B}^{d}}\in\Omega$
we shall consider the associated Markov process $\left(X_{t}^{\omega}\right)_{t\geqslant0}$
with jump rate between $x$ and $y$ given by the \emph{conductance}
$\omega_{x,y}$ and write $P_{x}^{\omega}$ for its law starting from
$x\in\mathbb{Z}^{d}$. In many places we may simply write $\left(X_{t}\right)_{t\geqslant0}$
for simplicity. 
The environment itself will also be a random variable. In fact, throughout
the paper, we will assume that the conductances $\omega_{x,y}$, $\{x,y\}\in\mathbb{B}^{d}$,
are i.i.d.\ with common law $\mu$, whose support is included in
$[1,\infty)$. The law of the environment is therefore the product
measure $\mathbb{P}:=\mu^{\mathbb{B}^{d}}$ and we denote by $\mathbb{E}$
the associated expectation. 
Since $\mathbb{P}$ is a product measure, standard percolation arguments
guarantee that that the Markov process $\left(X_{t}\right)_{t\geqslant0}$
is well defined for almost all $\omega\in\Omega$, for all times,
see \textit{e.g.}\ \cite{kipnis1986central, DFGW, Mo99}.
Moreover it is reversible with respect to the counting measure, \textit{i.e.}\ for
all $x,y\in\mathbb{Z}^{d}$, it holds $P_{x}^{\omega}\left(X_{t}=y\right)=P_{y}^{\omega}\left(X_{t}=x\right)$.

Now, define the translation operators $\left(\theta_{z}\right)_{z\in\mathbb{Z}^{d}}$
given by $\left(\theta_{z}\omega\right)_{x,y}:=\omega_{x+z,y+z}$.
Then the Markov process $\left(\omega\left(t\right)\right)_{t\geqslant0}:=\left(\theta_{X_{t}}\omega\right)_{t\geqslant0}$,
called \emph{the environment viewed by the particle}, is reversible
with respect to $\mathbb{P}$.


The aim of this short paper is to give an optimal quantitative bound
on the decay to equilibrium of $\left(\omega\left(t\right)\right)_{t\geqslant0}$.

In order to state our theorem, we need some more notations. A function
$f\colon\mathbb{Z}^{d}\times\Omega\rightarrow\mathbb{R}$ is said
to be \emph{translation invariant} if $f\left(x,\omega\right)=f\left(0,\theta_{x}\omega\right)$
for all $x\in\mathbb{Z}^{d}$ and \emph{local} if $f(0,\cdot)$ depends
only on a finite set of conductances. The smallest set satisfying
that property is called the support of $f$ and is denoted by $supp(f)$,
while $\#supp\left(f\right)$ stands for the size of the support (\textit{i.e.}\ the
number of sites of $\mathbb{Z}^{d}$ contained in $supp(f)$). For
any translation invariant function $f$, we set $\mathbb{E}[f]:=\mathbb{E}[f(0,\cdot)]$.

Our main theorem is the following.

\begin{thm} \label{thm:MainTheorem} Let $d\geq3$. Assume that the
law $\mu$ of the conductance has support in $[1,\infty)$ and finite
second moment $\mathbb{E}\left[\omega_{\cdot,\cdot}^{2}\right]<\infty$.
Then there exists a constant $C>0$ that depends only on $d$ such
that for all local translation invariant function $f\colon\mathbb{Z}^{d}\times\Omega\rightarrow\mathbb{R}$
with $\mathbb{E}\left[f\right]=0$, all $x\in\mathbb{Z}^{d}$, it
holds 
\begin{equation}
\mathbb{E}\left(E_{x}^{\omega}\left[f\left(X_{t},\omega\right)\right]^{2}\right)\leqslant C\mathbb{E}\left[\omega_{\cdot,\cdot}^{2}\right]\#supp\left(f\right)^{2}\frac{\mathbb{E}\left[f^{2}\right]}{t^{d/2}}\qquad\forall t>0.\label{eq:main}
\end{equation}

\end{thm}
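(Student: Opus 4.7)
The plan is to reduce the problem to an $L^2(\mathbb{P})$ decay estimate for the semigroup $P_t=e^{tL}$ of the environment viewed by the particle, then expand this norm via self-adjointness, and finally extract the stated constants from a uniform on-diagonal heat-kernel bound together with the locality and mean-zero property of $f$.

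Writing $\bar f(\omega):=f(0,\omega)$, translation invariance of $f$ and stationarity of $\mathbb{P}$ imply that for every $x\in\mathbb{Z}^d$,
\[
\mathbb{E}\bigl[E_x^\omega[f(X_t,\omega)]^2\bigr]=\mathbb{E}\bigl[(P_t\bar f)^2\bigr]=\|P_t\bar f\|_{L^2(\mathbb{P})}^2,
\]
so the inequality to prove becomes $\|P_t\bar f\|_{L^2(\mathbb{P})}^2\le C\,\mathbb{E}[\omega_{\cdot,\cdot}^2]\,N^2\,\mathbb{E}[f^2]/t^{d/2}$, where $N:=\#\mathrm{supp}(f)$. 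Since $(\omega(t))_{t\ge0}$ is reversible with respect to $\mathbb{P}$, $L$ is self-adjoint on $L^2(\mathbb{P})$, so $\|P_t\bar f\|^2=\langle\bar f,P_{2t}\bar f\rangle$; unfolding $P_{2t}\bar f(\omega)=E_0^\omega[\bar f(\theta_{X_{2t}}\omega)]$ via the walk's transition kernel $p_t^\omega(0,z):=P_0^\omega(X_t=z)$ yields
\[
\|P_t\bar f\|_{L^2(\mathbb{P})}^2=\sum_{z\in\mathbb{Z}^d}\mathbb{E}\bigl[\bar f(\omega)\,p_{2t}^\omega(0,z)\,\bar f(\theta_z\omega)\bigr].
\]

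The first input is the $\omega$-uniform on-diagonal bound $p_{2t}^\omega(0,z)\le Ct^{-d/2}$ for all $z\in\mathbb{Z}^d$: since every conductance is at least $1$, the walk's Dirichlet form dominates that of the simple random walk on $\mathbb{Z}^d$, and the classical Nash inequality yields this estimate with a constant depending only on $d$. The second input is the locality of $f$: setting $S:=\mathrm{supp}(f)$, the functions $\bar f(\omega)$ and $\bar f(\theta_z\omega)$ depend on disjoint edge variables as soon as $z\notin S-S$, a set of cardinality at most $N^2$. For such $z$ they are $\mathbb{P}$-independent and, because $\mathbb{E}[\bar f]=0$, uncorrelated. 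The $O(N^2)$ ``overlap'' displacements $z\in S-S$ are handled directly by combining the uniform heat-kernel bound with Cauchy--Schwarz and stationarity, producing the clean contribution $Ct^{-d/2}N^2\mathbb{E}[f^2]$.

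The main obstacle is the decoupling of $p_{2t}^\omega(0,z)$ from $\bar f(\omega)\bar f(\theta_z\omega)$ for $z\notin S-S$: as a functional of $\omega$, the transition kernel $p_{2t}^\omega(0,z)$ is global and in particular depends on the edges inside $S\cup(S+z)$, re-entangling the otherwise independent factors $\bar f(\omega)$ and $\bar f(\theta_z\omega)$. My plan is to condition on the conductances outside a bounded neighborhood of $S\cup(S+z)$, and then absorb the residual interaction by a Cauchy--Schwarz inequality applied to the finitely many ``entangling'' edges adjacent to that neighborhood; the finite second moment $\mathbb{E}[\omega_{\cdot,\cdot}^2]$ enters at exactly this step and produces the corresponding factor. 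Executing this decoupling quantitatively enough to preserve the diffusive scaling $t^{-d/2}$ after summation over $z$ is the technical heart of the proof, and is presumably where the ``analytic approach of Gloria--Neukamm--Otto'' advertised in the abstract is deployed, via a Nash-type functional inequality on $L^2(\mathbb{P})$ rather than by detours through correctors.
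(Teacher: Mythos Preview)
Your reduction to $\|P_t\bar f\|_{L^2(\mathbb P)}^2=\langle\bar f,P_{2t}\bar f\rangle=\sum_z\mathbb E[\bar f(\omega)\,p_{2t}^\omega(0,z)\,\bar f(\theta_z\omega)]$ is correct, as is the treatment of the finitely many $z\in S-S$ via the uniform on-diagonal bound. But the proposal stops precisely where the difficulty begins, and the mechanism you sketch for the remaining terms does not work as stated.

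For $z\notin S-S$ the factors $\bar f(\omega)$ and $\bar f(\theta_z\omega)$ are indeed independent, but $p_{2t}^\omega(0,z)$ couples them. Conditioning on the conductances outside $S\cup(S+z)$ and ``absorbing the residual interaction by Cauchy--Schwarz on finitely many entangling edges'' forces you to control, for each edge $e$ near $S$, the fluctuation $p_{2t}^\omega(0,z)-\mathbb E^{(e)}[p_{2t}^\omega(0,z)]$. This quantity is \emph{not} local: by Duhamel it equals an integral $\int_0^{2t}\!P^\omega_{2t-s}(\partial_e\mathcal L)P^\omega_s\,ds$ involving the full semigroup, and after Cauchy--Schwarz the sum over $z$ of its square has no reason to be $O(t^{-d/2})$ without further input. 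In other words, your ``finitely many edges'' picture is misleading; the sensitivity of the heat kernel to a single conductance is a global object, and bounding it with the right decay is the whole problem, not a bookkeeping step. The second moment $\mathbb E[\omega^2]$ cannot simply ``enter'' here by one application of Cauchy--Schwarz.

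The paper follows a different route that avoids expanding $\langle\bar f,P_{2t}\bar f\rangle$ altogether. It applies Efron--Stein directly to $f_t=P_tf$, obtaining $\mathbb E[f_t^2]\le\sum_y\mathbb E[(\partial_yP_tf)^2]$, and then uses Duhamel to write $\partial_yP_tf=P_t\partial_yf+\int_0^tP_{t-s}h_s\,ds$. The first piece is handled exactly as your ``overlap'' terms (locality of $f$ + heat-kernel decay, giving the factor $\#\mathrm{supp}(f)^2$). For the second piece the commutator $h_s$ is computed explicitly: it is supported on the single site $y$ and its neighbors, which turns $P_{t-s}h_s$ into a discrete gradient of the heat kernel times a local quantity $g_s$. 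A short Cauchy--Schwarz argument (this is where $\mathbb E[\omega^2]$ genuinely appears) bounds $g_s$ by $(-\partial_s\mathbb E[f_s^2])^{1/2}$, yielding a closed inequality of the form
\[
\mathbb E[f_t^2]^{1/2}\lesssim (t+1)^{-d/4}+\int_0^t(t-s+1)^{-d/4}\bigl(-\partial_s\mathbb E[f_s^2]\bigr)^{1/2}\,ds,
\]
which is then resolved by a Gronwall-type lemma. The self-referential structure $a(t)\lesssim\cdots+\int(\cdots)\sqrt{-\partial_sa^2}\,ds$ is the analytic heart of the argument and has no counterpart in your outline.
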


Let us comment on Theorem \ref{thm:MainTheorem}.

We first observe that similar results already exist in the literature.
One of us \cite{Mo99} proved a polynomial decay in dimension $1$ with
exponent $1/2$, in dimension 2 with behavior $\log t/t$, in dimension
3 to 6 with $t^{-1}$ and in dimension 7 and higher with $t^{\frac{-d}{2}+2}$.
Moreover, in the right hand side of \eqref{eq:main} appears a much
stronger norm than only the $L^{2}$ norm (the sum of the $L^{\infty}$
norm of $f$ and the so-called triple norm $|||f|||$ that involves
the infinite sum of $L^{\infty}$-norm of local gradients of $f$,
a natural norm largely used in the statistical mechanics literature,
see for example \cite{martinelli} and also \cite{bertini,janvresse,roberto}).
On the other hand, the results of \cite{Mo99} hold without the assumption on
the finiteness of the second moment of the conductance. More recently,
Gloria, Neukamm and Otto \cite{gloria} obtained, among other results,
a polynomial decay $t^{-\frac{d}{2}-1}$ when the function $f$ is
the divergence of some other function. Because we follow closely their
approach, our result is not far from theirs, although they need to
assume that conductances are bounded, \textit{i.e.}\ that $\mu$
has support contained in a compact set and that the function $f$
considered is in $L^{p}$, where $p$ is very large and not explicit. Finally,
we mention that \cite{yurinskii,cudna,gloria-otto11,gloria-otto12}
are related papers that deal with estimates on the diffusion matrix
and we refer to \cite{new-gno,am} and references therein for recent
results on homogenization of random operators.

\smallskip{}

Then, we observe that the power in \eqref{eq:main} is the best possible,
and we note that it can be obtained under stronger assumptions in \cite{gloria}.
On the other hand, we believe that the result should hold without
the assumption on the finiteness of the second moment of the conductance,
which appears as a technical fact in our proof. The second moment
$\mathbb{E}\left[f^{2}\right]$ is also best possible. Indeed, a smaller
norm would imply some regularization effect that would in turn imply
a spectral gap estimate which is known not to hold. For the same reason,
there must be some dependence, in the right hand side of \eqref{eq:main},
on the size of the support of $f$. Finally, we observe that the assumption
on the support of $\mu$ is also necessary since it is known, for
some specific choice of function $f$, with conductances taking small
values, that the decay of the heat-kernel coefficients can be very
slow, see \cite{berger2008anomalous}, which would interfer with the
decay in \eqref{eq:main}.

\smallskip{}

As for the proof, diffusive scaling is usually obtained, including
other settings on graphs such as interacting particle systems (Kawasaki
dynamics), using functional inequalities of Nash type,
see \textit{e.g.}\ \cite{liggett,bertini,Mo99}, Harnack \cite{delmotte}
or weak Poincaré inequalities \cite{bobkov,roberto}, see also \cite{bakry,saloff}.
Induction techniques can also be used \cite{janvresse,cesi}, or specific
aspects of the model such as attractivness \cite{deuschel}. However,
none of these techniques seem, to the best of our knowledge, to apply
to our setting. Here instead, we will mainly follow the PDEs ideas
from \cite{gloria}, but with many simplifications due to our non
specific choice of class of functions.

\smallskip{}

As a result we believe that our approach of the decay to equilibrium
of the environment viewed by the particle improves upon known results
into different directions (class of function, assumption
on the conductance, short and elementary proof), but, as a counterpart,
we are not able to extend the result of Gloria, Neukamm and Otto \cite{gloria}
to unbounded conductances for divergence functions. 

\medskip{}

The paper is organized as follows. In the next subsection we will
give some more notations. Then, we will give the proof of Theorem
\ref{thm:MainTheorem}. Such a proof will rely on a series of lemmas
that we will prove later on. Finally, in the last section, using the
theory of completely monotonic functions, we comment on a possible
short way of extending the results of Gloria, Neukamm and Otto \cite{gloria}
to our setting.

\subsection{Notations}

In this section we give some more notations.

Given an environment $\omega\in\Omega$, the infinitesimal generator
of the process $(X_{t})_{t\geq0}$ and its associated semi-group,
acting on functions $f\colon\mathbb{Z}^{d}\times\Omega\to\mathbb{R}$
are given, for any $x,\omega\in\mathbb{Z}^{d}\times\Omega$ by 
\[
\mathcal{L}^{\omega}f\left(x,\omega\right)=\sum_{\left|z\right|=1}\omega_{x,x+z}\left(f\left(x+z,\omega\right)-f\left(x,\omega\right)\right), \notag
\]
respectively by 
\[
P_{t}^{\omega}f\left(x,\omega\right)=\exp\left(\mathcal{L}^{\omega}t\right)f\left(x,\omega\right)=\sum_{y\in\mathbb{Z}^{d}}P_{x}^{\omega}\left(X_{t}=y\right)f\left(y,\omega\right). \notag
\]

In some situations, we may want to work with a given fixed environment.
In that case, and to emphasize this fact, we shall use the letter
$m$ instead of $\omega$ and call that given environment \emph{the
walk scheme}. A walk scheme is well-defined when there is a threshold 
such that the set of conductances above it does not percolate. 
This will be used in particular to evaluate the behaviour
of $\left(X_{t}^{m}\right)_{t\geqslant0}$.

In many places we shall use the following equality $P_{0}^{\theta_{x}\omega}\left(X_{t}=y\right)=P_{x}^{\omega}\left(X_{t}=y+x\right)$
that holds for all $x,y\in\mathbb{Z}^{d}$ and all $\omega\in\Omega$.

For simplicity of notation, we set $f_{t}\left(x,\omega\right):=P_{t}^{\omega}f\left(x,\omega\right)=E_{x}^{\omega}f\left(X_{t},\omega\right)$,
$t\geq0$, $x\in\mathbb{Z}^{d}$, $\omega\in\Omega$, where $E_{x}^{\omega}$
is the mean associated to $P_{x}^{\omega}$, and in many places we
shall omit the dependence in $\omega$ when there is no ambiguity.

Next, we define three different gradients. Denote by $e_{1},\dots,e_{d}$
the canonical orthonormal basis of $\mathbb{Z}^{d}$ (\textit{i.e.}\ for
all $i$, $e_{i}=(0,\dots,0,1,0,\dots,0)$ with $1$ at the $i^{th}$
coordinate), set $e_{-i}=-e_{i}$, $i=1,\dots,d$ and $e_{0}:=(0,\dots,0)$
for the origin. With that notation in hand, we define the following
local gradient of $g\colon\mathbb{Z}^{d}\rightarrow\mathbb{R}$: 
\[
D_{i}g(y):=g\left(y+e_{i}\right)-g\left(y\right)\qquad-d\leqslant i\leqslant d,\quad y\in\mathbb{Z}^{d}. \notag
\]
In particular, if $f\colon\mathbb{Z}^{d}\times\Omega\rightarrow\mathbb{R}$,
then $D_{i}f\left(y,\omega\right)=f\left(y+e_{i},\omega\right)-f\left(y,\omega\right)$,
$\omega\in\Omega$, and $D_{i}P_{x}^{\omega}\left(X_{t}=y\right)$
stands for the gradient applied to the mapping $y\mapsto P_{x}^{\omega}\left(X_{t}=y\right)$.
Similarly, 
\[
\nabla^{i}P_{x}^{m}\left(X_{t}=y\right):=P_{x+e_{i}}^{m}\left(X_{t}=y\right)-P_{x}^{m}\left(X_{t}=y\right)\qquad-d\leqslant i\leqslant d,\quad x,y\in\mathbb{Z}^{d}. \notag
\]
Therefore, the infinitesimal generator of a random walk with scheme
$m$ can be written as 
\[
\mathcal{L}^{m}f\left(x,\omega\right)=\sum_{-d\leqslant i\leqslant d}m_{x,x+e_{i}}D_{i}f\left(x,\omega\right)\qquad x\in\mathbb{Z}^{d},\quad\omega\in\Omega. \notag
\]

Finally, for $x\in\mathbb{Z}^{d}$, let $a\left(x\right)=\left\{ \omega_{x,x+e_{i}};\,1\leqslant i\leqslant d\right\} $,
$\overline{a}\left(x\right)=\{\omega_{e},e\in\mathbb{B}^{d}\}\setminus a(x)=\cup_{y\neq x}a\left(y\right)$
and $\mathbb{E}^{(x)}$ be the conditional expectation given $\overline{a}\left(x\right)$.
Then, for $y\in\mathbb{Z}^{d}$, we set 
\[
\partial_{x}f\left(y,\omega\right):=f\left(y,\omega\right)-\mathbb{E}^{(x)}\left[f\left(y,\omega\right)\right] \notag
\]
so that $\mathbb{E}^{(x)}[(\partial_{x}f)^{2}]$ is nothing else than
the variance of $f$ with respect to the conditional expectation $\mathbb{E}^{(x)}$.
Two sites $x,y$ are \emph{neighbors}, a property we denote by $x\sim y$,
if $\{x,y\}\in\mathbb{B}^{d}$. Also, given $f\colon\Omega\rightarrow\mathbb{R}$,
we say that $x$ is a neighbor of $y$ with respect to $f$, and write
$x\underset{f}{\sim}y$, if $a\left(x-y\right)\cap supp\left(f\right)\neq\emptyset$
(observe that this is not an equivalence relation). In particular,
observe that, if $x$ is not a neighbor of $y$ with respect to a
local translation invariant function $f$ (\textit{i.e.}\ if $a\left(x-y\right)\cap supp\left(f\right)=\emptyset$)
then $\partial_{x}f\left(y,\omega\right)=0$.

\section{Variance decay for unbounded conductances: proof of Theorem \ref{thm:MainTheorem}}

In this section we prove Theorem \ref{thm:MainTheorem}. The idea,
following \cite{gloria}, is to decompose the variance $\mathbb{E}\left(E_{x}^{\omega}\left[f\left(X_{t},\omega\right)\right]^{2}\right)$,
using Efron-Stein's inequality, into an infinite sum of terms of the
type $\mathbb{E}\left[\left(\partial_{y}P_{t}f\right)^{2}\right]$,
which, by Duhamel's formula, are split into two different terms that
need to be analyzed separately (the core of the proof). The point
in using Duhamel's Formula is to commute the operators $P_{t}$ and
$\partial_{y}$. The proof ends by applying some sort of Gronwall Lemma.

\begin{proof}{[}Proof of Theorem \ref{thm:MainTheorem}{]} Let $f\colon\Omega\to\mathbb{R}$
be a local translation invariant function with $\mathbb{E}[f]=0$,
and assume, by homogeneity and for simplicity, that $\mathbb{E}[f^{2}]=1$.
Following \cite{gloria}, we apply Efron-Stein's Inequality and the
Duhamel formula (that we recall below, in Lemma \ref{froid}, for
completeness) to bound $\mathbb{E}\left[f_{t}^{2}\right]$: 
\begin{align}
\mathbb{E}\left[f_{t}^{2}\right] & \leqslant\mathbb{E}\left[\sum_{y\in\mathbb{Z}^{d}}\left(\partial_{y}P_{t}f\right)^{2}\right]=\mathbb{E}\left[\sum_{y\in\mathbb{Z}^{d}}\left(P_{t}\partial_{y}f+\int_{0}^{t}P_{t-s}h_{s}\left(0,y,\omega\right)ds\right)^{2}\right]\notag\label{MainProof1}\\
 & \leqslant2\mathbb{E}\left[\sum_{y\in\mathbb{Z}^{d}}\left(P_{t}\partial_{y}f\right)^{2}\right]+2\mathbb{E}\left[\sum_{y\in\mathbb{Z}^{d}}\left(\int_{0}^{t}P_{t-s}h_{s}\left(0,y,\omega\right)ds\right)^{2}\right].
\end{align}
where $h_{s}\left(x,y,\omega\right):=\mathbb{E}^{(y)}\left[\mathcal{L}f_{s}\left(x,\omega\right)\right]-\mathcal{L}\mathbb{E}^{(y)}\left[f_{s}\left(x,\omega\right)\right]$,
$x,y\in\mathbb{Z}^{d}$, $\omega\in\Omega$, $s\geq0$. Next we analyze
each term of the right hand side of the latter separately and start
with the first one.

First recall that if $a\left(y-x\right)\cap supp\left(f\right)=\emptyset$
then $\partial_{y}f\left(x,\omega\right)=0$. Hence 
\begin{align}
\mathbb{E}\left[\sum_{y\in\mathbb{Z}^{d}}\left(P_{t}\partial_{y}f\right)^{2}\right] & =\mathbb{E}\left[\sum_{y}\left(\sum_{x\in\mathbb{Z}^{d}}P_{0}^{\omega}\left(X_{t}=x\right)\partial_{y}f\left(x,\omega\right)\right)^{2}\right]\notag\label{MainProof2}\\
 & \leqslant\#supp(f)\sum_{y}\sum_{x:y\underset{f}{\sim}x}\mathbb{E}\left[P_{0}^{\omega}\left(X_{t}=x\right)^{2}\partial_{y}f\left(x,\omega\right)^{2}\right].
\end{align}
By invariance by translation we have 
\begin{align*}
\mathbb{E}\left[P_{0}^{\omega}\left(X_{t}=x\right)^{2}\partial_{y}f\left(x,\omega\right)^{2}\right] & =\mathbb{E}\left[P_{0}^{\theta_{-x}\omega}\left(X_{t}=x\right)^{2}\partial_{y}f\left(x,\theta_{-x}\omega\right)^{2}\right]\\
 & =\mathbb{E}\left[P_{0}^{\omega}\left(X_{t}=-x\right)^{2}\partial_{y-x}f\left(0,\omega\right)^{2}\right].
\end{align*}
Therefore, changing variables (set $x'=y-x$ and $y'=x'-y$), it holds
\begin{align*}
\mathbb{E}\left[\sum_{y\in\mathbb{Z}^{d}}\left(P_{t}\partial_{y}f\right)^{2}\right] & \leqslant\#supp(f)\sum_{y}\sum_{x:y\underset{f}{\sim}x}\mathbb{E}\left[P_{0}^{\omega}\left(X_{t}=-x\right)^{2}\partial_{y-x}f\left(0,\omega\right)^{2}\right]\notag\\
 & \leqslant\#supp(f)\sum_{y}\sum_{x':x'\underset{f}{\sim}0}\mathbb{E}\left[P_{0}^{\omega}\left(X_{t}=x'-y\right)^{2}\partial_{x'}f\left(0,\omega\right)^{2}\right]\notag\\
 & \leqslant\#supp(f)\sum_{x':x'\underset{f}{\sim}0}\sum_{y'}\mathbb{E}\left[P_{0}^{\omega}\left(X_{t}=y'\right)^{2}\partial_{x'}f\left(0,\omega\right)^{2}\right].
\end{align*}
Finally, using Lemma $\ref{pro:DecroissanceProbaMarche}$ below and
the fact that $\mathbb{E}\left[\partial_{x'}f\left(0,\cdot\right)^{2}\right]\leqslant2\mathbb{E}\left[f\left(0,\cdot\right)^{2}\right]=2\mathbb{E}\left[f^{2}\right]=2$,
we conclude that, for some constant $C$ that depends only on $d$,
\begin{equation}
\mathbb{E}\left[\sum_{y\in\mathbb{Z}^{d}}\left(P_{t}\partial_{y}f\right)^{2}\right]\leqslant C\frac{\#supp\left(f\right)^{2}}{\left(t+1\right)^{d/2}}\label{term1}
\end{equation}

Next we focus on the second term in the right hand side of \eqref{MainProof1}.
Using Lemma \ref{pro:PropositionIntermediaire} we have 
\[
\mathbb{E}\left[\sum_{y\in\mathbb{Z}^{d}}\left(\int_{0}^{t}P_{t-s}h_{s}\left(0,y,\omega\right)ds\right)^{2}\right]=\mathbb{E}\left[\sum_{y\in\mathbb{Z}^{d}}\left(\int_{0}^{t}\sum_{i=1}^{d}D_{i}P_{0}^{\omega}\left(X_{t-s}=y\right)g_{s}\left(y,y,\omega,i\right)ds\right)^{2}\right]\notag
\]
so that, by Minkowski's integral inequality and the invariance by
translation, it holds 
\begin{align}
 & \mathbb{E}\left[\sum_{y\in\mathbb{Z}^{d}}\left(\int_{0}^{t}P_{t-s}h_{s}\left(0,y,\omega\right)ds\right)^{2}\right]^{\nicefrac{1}{2}}\notag\label{MainProof4}\\
 & \qquad\qquad\leqslant\sqrt{d}\int_{0}^{t}\left(\sum_{y}\sum_{i}\mathbb{E}\left[\left(D_{i}P_{0}^{\omega}\left(X_{t-s}=y\right)\right)^{2}g_{s}\left(y,y,\omega,i\right)^{2}\right]\right)^{\nicefrac{1}{2}}ds\notag\\
 & \qquad\qquad=\sqrt{d}\int_{0}^{t}\left(\sum_{y}\sum_{i}\mathbb{E}\left[\left(\nabla^{i}P_{0}^{\omega}\left(X_{t-s}=y\right)\right)^{2}g_{s}\left(0,0,\omega,i\right)^{2}\right]\right)^{\nicefrac{1}{2}}ds\notag\\
 & \qquad\qquad\leqslant\sqrt{2d}\int_{0}^{t}\left(\sum_{y}\sum_{i}\mathbb{E}\left[\left(\nabla^{i}P_{0}^{\omega}\left(X_{t-s}=y\right)\right)^{2}\mathbb{E}^{(0)}\left[\omega_{0,e_{i}}D_{i}f_{s}\left(0,\omega\right)\right]^{2}\right]\right.\notag\\
 & \qquad\qquad\quad+\left.\mathbb{E}\left[\omega_{0,e_{i}}^{2}\left(\nabla^{i}P_{0}^{\omega}\left(X_{t-s}=y\right)\right)^{2}\mathbb{E}^{(0)}\left[D_{i}f_{s}\left(0,\omega\right)\right]^{2}\right]\right)^{\nicefrac{1}{2}}ds
\end{align}
where $g_{s}\left(x,y,\omega,i\right):=\mathbb{E}^{(x)}\left[\omega_{y,y+e_{i}}D_{i}f_{s}\left(y,\omega\right)\right]-\omega_{y,y+e_{i}}\mathbb{E}^{(x)}\left[D_{i}f_{s}\left(y,\omega\right)\right]$,
$s\geq0$, $x,y\in\mathbb{Z}^{d}$, $\omega\in\Omega$ and $i=1,\dots,d$.
Therefore, using twice that $\left(a+b\right)^{2}\leqslant2a^{2}+2b^{2}$,
Lemma \ref{pro:DecroissanceProbaMarche} and Lemma \ref{lem:Lemmeclef}
guarantee that, for some constant $C$ that depends only on $d$,
it holds 
\begin{align}
 & \mathbb{E}\left[\sum_{y\in\mathbb{Z}^{d}}\left(\int_{0}^{t}P_{t-s}h_{s}\left(y,0,\omega\right)ds\right)^{2}\right]^{\nicefrac{1}{2}}\!\!\!\leqslant C\int_{0}^{t}\left(\left(t-s+1\right)^{-\frac{d}{2}}\sum_{i}\mathbb{E}\left[\mathbb{E}^{(0)}\left[\omega_{0,e_{i}}D_{i}f_{s}\left(0,\omega\right)\right]^{2}\right]\right.\notag\\
 & \qquad\qquad\quad+\left.\left(t-s+1\right)^{-d/2}\sum_{i}\mathbb{E}\left[\omega_{0,e_{i}}^{2}\mathbb{E}^{(0)}\left[D_{i}f_{s}\left(0,\omega\right)\right]^{2}\right]\right)^{1/2}ds\notag\\
 & \qquad\qquad\quad\leqslant\sqrt{2}C\int_{0}^{t}\left(t-s+1\right)^{-\nicefrac{d}{4}}\mathbb{E}\left[\omega_{0,e_{1}}^{2}\right]^{1/2}\left(-\partial_{s}\mathbb{E}\left[|f_{s}\left(0,\omega\right)|^{2}\right]\right)^{1/2}ds. \label{term2} \end{align}
Plugging \eqref{term1} and \eqref{term2} into \eqref{MainProof1},
we end up with 
\[
\mathbb{E}\left[f_{t}^{2}\right]^{\frac{1}{2}}\leqslant C'\#supp\left(f\right)\mathbb{E}\left[\omega_{0,e_{1}}^{2}\right]^{\frac{1}{2}}\left(\left(t+1\right)^{-\frac{d}{4}}+\int_{0}^{t}\left(t-s+1\right)^{-\frac{d}{4}}\left(-\partial_{s}\mathbb{E}\left[f_{s}^{2}\right]\right)^{\frac{1}{2}}ds\right)\notag
\]
for some constant $C'$ that depends only on $d$. The expected result
will finally follow from Lemma \ref{lem:Lemme15bis} with $a\left(t\right):=\mathbb{E}\left[f_{t}^{2}\right]^{\frac{1}{2}}$
and $\alpha=\frac{d}{4}$. Indeed, $a(t)$ is non-increasing since,
using classical computations for reversible Markov processes, $\partial_{t}\mathbb{E}\left[f_{t}^{2}\right]=2\mathbb{E}\left[f_{t}\mathcal{L}f_{t}\right]=-\sum_{i}\mathbb{E}\left[\omega_{0,e_{i}}(D_{i}f_{t})^{2}\right]\leq0$.
This ends the proof. \end{proof}

In the proof of Theorem \ref{thm:MainTheorem} we used the following
series of lemma. The first two lemmas are well known results from
Probability Theory and Analysis. The others are technical.

\begin{lem}{[}Efron-Stein's Inequality and the Duhamel formula{]}
\label{froid} The following holds. 
\begin{description}
\item [{(Efron-Stein's Inequality)}] Let $n>1$ and $f$ be a function
of $X_{1},...,X_{n}$, $n$ independent variables, then 
\[
\mathbb{V}ar\left(f\right)\leqslant\sum_{i=1}^{n}\mathbb{E}\left[\mathbb{V}ar^{(i)}\left(f\right)\right]\notag
\]
where $\mathbb{V}ar^{(i)}$ is the conditional variance given $\left\{ X_{1},...,X_{n}\right\} \backslash\left\{ X_{i}\right\} $. 
\item [{(Duhamel's Formula)}] For all $t\geqslant0$ and almost all $\omega\in\Omega$
it holds 
\[
\partial_{y}P_{t}f\left(x,\omega\right)=P_{t}\partial_{y}f\left(x,\omega\right)+\int_{0}^{t}P_{t-s}h_{s}\left(x,y,\omega\right)ds \notag
\]
where $h_{s}\left(x,y,\omega\right):=\mathbb{E}^{(y)}\left[\mathcal{L}f_{s}\left(x,\omega\right)\right]-\mathcal{L}\mathbb{E}^{(y)}\left[f_{s}\left(x,\omega\right)\right]$,
$x,y\in\mathbb{Z}^{d}$, $\omega\in\Omega$, $s\geq0$. 
\end{description}
\end{lem}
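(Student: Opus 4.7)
The plan is to treat the two parts of Lemma \ref{froid} separately, since they are classical results adapted to this setting rather than deep new statements.

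For Efron--Stein, I would use the Doob martingale decomposition. Fix any ordering of the variables $X_1,\dots,X_n$, let $\mathcal{F}_i = \sigma(X_1,\dots,X_i)$ with $\mathcal{F}_0$ trivial, and set $\Delta_i = \mathbb{E}[f\mid\mathcal{F}_i]-\mathbb{E}[f\mid\mathcal{F}_{i-1}]$. Then $f-\mathbb{E}[f]=\sum_i \Delta_i$, the $\Delta_i$ are orthogonal martingale differences, so $\mathbb{V}\mathrm{ar}(f)=\sum_i \mathbb{E}[\Delta_i^2]$. The key observation is $\mathbb{E}[f\mid\mathcal{F}_{i-1}] = \mathbb{E}[\mathbb{E}^{(i)}[f]\mid\mathcal{F}_{i-1}]$ (since $\mathbb{E}^{(i)}[f]$ does not depend on $X_i$), which rewrites $\Delta_i = \mathbb{E}[f-\mathbb{E}^{(i)}[f]\mid\mathcal{F}_i]$. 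Conditional Jensen then gives $\Delta_i^2 \le \mathbb{E}[(f-\mathbb{E}^{(i)}[f])^2\mid\mathcal{F}_i]$, and taking expectations yields $\mathbb{E}[\Delta_i^2]\le \mathbb{E}[\mathbb{V}\mathrm{ar}^{(i)}(f)]$, summing which proves the inequality.

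For Duhamel's formula, set $u_t(x,\omega):=\partial_y f_t(x,\omega) = f_t(x,\omega)-\mathbb{E}^{(y)}[f_t(x,\omega)]$. The heat equation $\partial_t f_t = \mathcal{L}f_t$ gives
\[
\partial_t u_t = \mathcal{L} f_t - \mathbb{E}^{(y)}[\mathcal{L} f_t] = \mathcal{L} u_t + \bigl(\mathcal{L}\mathbb{E}^{(y)}[f_t]-\mathbb{E}^{(y)}[\mathcal{L}f_t]\bigr)= \mathcal{L}u_t - h_t(\cdot,y,\cdot), \notag
\]
after writing $\mathcal{L}f_t = \mathcal{L}u_t+\mathcal{L}\mathbb{E}^{(y)}[f_t]$ and recognizing the commutator $[\mathbb{E}^{(y)},\mathcal{L}]f_t$ as $h_t$. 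Since $u_t$ solves an inhomogeneous linear ODE with generator $\mathcal{L}$, variation of parameters (\emph{i.e.} differentiating $s\mapsto P_{t-s}u_s$ on $[0,t]$) yields the claimed integral representation $u_t = P_t u_0 + \int_0^t P_{t-s}h_s\,ds$ up to sign convention in the definition of $h_s$.

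The only genuine obstacle is justifying the formal manipulations rigorously: exchanging $\partial_t$ with the conditional expectation $\mathbb{E}^{(y)}$ and with the possibly infinite sum defining $\mathcal{L}$, and making sense of $P_{t-s}h_s$ as an $L^2$-valued Bochner integral. This is where the assumption that the process is well-defined for almost every $\omega$ (via the percolation argument recalled earlier) and the square-integrability of the conductances enter: for $f$ local and translation invariant, $f_s$ has enough regularity in $\omega$ (via finite jump rates from the origin having finite second moment) that $\mathcal{L}f_s$ lies in $L^2(\mathbb{P})$, the derivative $\partial_t f_t = \mathcal{L} f_t$ holds in $L^2$, and the same holds for $u_t$. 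Once these integrability facts are in hand, everything reduces to the classical Duhamel/variation-of-parameters computation, so I would carry out the $L^2$ justification first and then present the short ODE-style derivation above.
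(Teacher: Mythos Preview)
Your proposal is correct and follows essentially the same route as the paper. For Efron--Stein, the paper simply cites it as the tensorisation of the variance ``following from Cauchy--Schwarz' inequality'' and gives references; your martingale-difference proof via conditional Jensen is one of the standard derivations and is entirely equivalent. For Duhamel, the paper writes the identity as $\partial_y P_t f = P_t\partial_y f + \int_0^t \partial_s\bigl(P_{t-s}\partial_y P_s f\bigr)\,ds$ and then identifies the integrand with $P_{t-s}h_s$ via $h_s=(\partial_y\mathcal{L}-\mathcal{L}\partial_y)P_sf$; your computation of $\partial_t u_t=\mathcal{L}u_t-h_t$ followed by variation of parameters is the same argument in slightly different clothing.

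Two small remarks. First, the sign issue you flag is real and is in fact shared by the paper: with the definition $h_s=\mathbb{E}^{(y)}[\mathcal{L}f_s]-\mathcal{L}\mathbb{E}^{(y)}[f_s]$ one computes $(\partial_y\mathcal{L}-\mathcal{L}\partial_y)f_s=-h_s$, so strictly speaking the integral term carries a minus sign. This is immaterial downstream since only $h_s^2$ enters the estimates. Second, your paragraph on the rigorous justification (commuting $\partial_t$ with $\mathbb{E}^{(y)}$, Bochner integrability of $s\mapsto P_{t-s}h_s$) goes beyond what the paper's proof spells out; the paper treats these steps as formal, so including them is a genuine improvement rather than a deviation.
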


\begin{proof} Efron-Stein's Inequality, also called tensorisation
of the variance (see \textit{e.g.}\ \cite[Proposition 1.4.1]{ane}),
is a well known result following from Cauchy-Schwarz' inequality.
See \cite{latala} for an extension to general $\phi$-entropy, see
also \cite{Ma07}.

As for the Duhamel formula, we observe that 
\[
\partial_{y}P_{t}f\left(x,\omega\right)=P_{t}\partial_{y}f\left(x,\omega\right)+\int_{0}^{t}\partial_{s}P_{t-s}\partial_{y}P_{s}f\left(x,\omega\right)ds \notag
\]
which leads to the expected result, since $h_{s}\left(x,y,\omega\right)=\left(\partial_{y}\mathcal{L}-\mathcal{L}\partial_{y}\right)P_{s}f$
and $\partial_{t}P_{t}=\mathcal{L}P_{t}=P_{t}\mathcal{L}$. \end{proof}


\begin{lem} \label{pro:DecroissanceProbaMarche} There exists a constant
$C$ (that depends only on $d$) such that for 
all well-defined walk scheme $m$ and for all $t>0$ it holds 
\[
\sum_{y\in\mathbb{Z}^{d}}P_{0}^{m}\left(X_{t}=y\right)^{2}\leqslant C\left(t+1\right)^{-d/2}. \notag
\]
\end{lem}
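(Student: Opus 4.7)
\medskip
The plan is to reduce this to the classical Nash-inequality bound on the diagonal of the heat kernel, exploiting the fact that the walk scheme $m$ has conductances in $[1,\infty)$.

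First, I would observe that by reversibility with respect to the counting measure and Chapman--Kolmogorov,
\[
\sum_{y\in\mathbb{Z}^{d}}P_{0}^{m}(X_{t}=y)^{2}=\sum_{y\in\mathbb{Z}^{d}}P_{0}^{m}(X_{t}=y)P_{y}^{m}(X_{t}=0)=P_{0}^{m}(X_{2t}=0),
\]
so it suffices to bound the on-diagonal return probability $p^{m}_{2t}(0,0)$ by $C(t+1)^{-d/2}$.

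Next I would invoke the Nash inequality on $\mathbb{Z}^{d}$: there is $C_{N}=C_{N}(d)$ such that for every finitely supported $u\colon\mathbb{Z}^{d}\to\mathbb{R}$,
\[
\|u\|_{\ell^{2}}^{2+4/d}\leqslant C_{N}\,\mathcal{E}^{\mathrm{SRW}}(u,u)\,\|u\|_{\ell^{1}}^{4/d},
\]
where $\mathcal{E}^{\mathrm{SRW}}(u,u)=\tfrac{1}{2}\sum_{x\sim y}(u(y)-u(x))^{2}$. Since $m_{x,y}\geqslant 1$ for every edge (as $\mu$ is supported in $[1,\infty)$ and the walk scheme inherits this), the Dirichlet form satisfies $\mathcal{E}^{m}(u,u)\geqslant \mathcal{E}^{\mathrm{SRW}}(u,u)$, so Nash's inequality holds with the same constant $C_{N}$ for $\mathcal{E}^{m}$.

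Now set $u_{t}(y):=P_{0}^{m}(X_{t}=y)$ and $N(t):=\|u_{t}\|_{\ell^{2}}^{2}$. Then $\|u_{t}\|_{\ell^{1}}=1$ and, since $\partial_{t}u_{t}=\mathcal{L}^{m}u_{t}$ with $\mathcal{L}^{m}$ self-adjoint on $\ell^{2}$,
\[
N'(t)=2\langle u_{t},\mathcal{L}^{m}u_{t}\rangle=-\mathcal{E}^{m}(u_{t},u_{t})\leqslant-\frac{1}{C_{N}}N(t)^{1+2/d}.
\]
This differential inequality is the Carlen--Kusuoka--Stroock engine: integrating $-\tfrac{d}{dt}N^{-2/d}\geqslant \tfrac{2}{dC_{N}}$ yields $N(t)\leqslant C'\,t^{-d/2}$ for all $t>0$ and a constant $C'$ depending only on $d$. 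For small $t$ one has trivially $N(t)\leqslant\|u_{t}\|_{\ell^{\infty}}\|u_{t}\|_{\ell^{1}}\leqslant 1$, and combining the two regimes gives the claimed bound $N(t)\leqslant C(t+1)^{-d/2}$.

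There is no real obstacle here; the only point requiring care is the comparison $\mathcal{E}^{m}\geqslant \mathcal{E}^{\mathrm{SRW}}$, which is the essential use of the lower bound on the conductances and is the reason the constant in the bound depends only on $d$ and not on $m$.
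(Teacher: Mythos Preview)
Your proof is correct and follows essentially the same route as the paper: reduce via reversibility and Chapman--Kolmogorov to the on-diagonal heat kernel $P_{0}^{m}(X_{2t}=0)$, then obtain the $t^{-d/2}$ decay from Nash's inequality. The paper simply cites \cite[Theorem~2.1]{CKS} and \cite[Proposition~10.2]{Mo99} at that point, whereas you spell out their content --- the comparison $\mathcal{E}^{m}\geqslant\mathcal{E}^{\mathrm{SRW}}$ (using conductances $\geqslant 1$) and the Carlen--Kusuoka--Stroock differential-inequality argument --- so the two arguments coincide up to the level of detail. (A harmless slip: with your convention $\mathcal{E}^{\mathrm{SRW}}(u,u)=\tfrac12\sum_{x\sim y}(u(y)-u(x))^2$, one actually gets $N'(t)=-2\,\mathcal{E}^{m}(u_t,u_t)$, not $-\mathcal{E}^{m}$, but this only improves the constant.)
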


\begin{proof} The proof of this inequality is a consequence of
the reversibility and the fact that the invariant measure is the uniform
measure. Indeed 
\[
\sum_{y\in\mathbb{Z}^{d}}P_{0}^{m}\left(X_{t}=y\right)^{2}=\sum_{y\in\mathbb{Z}^{d}}P_{0}^{m}\left(X_{t}=y\right)P_{y}^{m}\left(X_{t}=0\right)=P_{0}^{m}\left(X_{2t}=0\right) \notag
\]
which gives the desired result combining \cite[Theorem 2.1]{CKS}
and \cite[Proposition 10.2]{Mo99} in its first arXiv version. \end{proof}


\begin{lem} \label{pro:PropositionIntermediaire} Given $f\colon\mathbb{Z}^{d}\times\Omega\to\mathbb{R}$,
define $h_{s}\left(x,y,\omega\right):=\mathbb{E}^{(y)}\left[\mathcal{L}f_{s}\left(x,\omega\right)\right]-\mathcal{L}\mathbb{E}^{(y)}\left[f_{s}\left(x,\omega\right)\right]$
and $g_{s}\left(x,y,\omega,i\right)=\mathbb{E}^{(y)}\left[\omega_{x,x+e_{i}}D_{i}f_{s}\left(x,\omega\right)\right]-\omega_{x,x+e_{i}}\mathbb{E}^{(y)}\left[D_{i}f_{s}\left(x,\omega\right)\right]$,
$x,y\in\mathbb{Z}^{d}$, $\omega\in\Omega$, $s\geq0$ and $i=0,\dots,d$.
Then, for all $s,t>0$ and all $x,y$ it holds 
\[
P_{t}h_{s}\left(x,y,\omega\right)=-\sum_{i=1}^{d}D_{i}P_{x}\left(X_{t}=y\right)g_{s}\left(y,y,\omega,i\right). \notag 
\]
\end{lem}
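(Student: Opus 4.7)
The plan is to compute $h_s(z,y,\omega)$ pointwise in $z$ and observe that, because $\mathbb{E}^{(y)}$ only integrates out the conductances in $a(y)=\{\omega_{y,y+e_j}\}_{j=1}^d$, the commutator $\mathbb{E}^{(y)}\mathcal{L}-\mathcal{L}\mathbb{E}^{(y)}$ acts trivially on any term whose conductance is measurable with respect to $\overline{a}(y)$. So the first step is to expand
\[
\mathcal{L} f_s(z,\omega)=\sum_{i\in\{\pm 1,\dots,\pm d\}}\omega_{z,z+e_i}\,D_i f_s(z,\omega)
\]
and keep only the summands whose coefficient lies in $a(y)$.

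Next I would identify exactly which $(z,i)$ survive. Since each edge of $\mathbb{B}^d$ is contained in a unique $a(\cdot)$, the coefficient $\omega_{z,z+e_i}$ lies in $a(y)$ in exactly two situations: either $z=y$ and $i\in\{1,\dots,d\}$, in which case the contribution to $h_s(z,y,\omega)$ is precisely $g_s(y,y,\omega,i)$; or $z=y+e_j$ and $i=-j$ for some $j\in\{1,\dots,d\}$. In this second case, I would use the two identities $\omega_{y+e_j,y}=\omega_{y,y+e_j}$ and $D_{-j}f_s(y+e_j,\omega)=-D_j f_s(y,\omega)$ to rewrite the contribution as $-g_s(y,y,\omega,j)$. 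Combining the two cases yields the compact formula
\[
h_s(z,y,\omega)=\sum_{i=1}^d g_s(y,y,\omega,i)\bigl(\mathbf{1}_{\{z=y\}}-\mathbf{1}_{\{z=y+e_i\}}\bigr).
\]

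The last step is simply to apply $P_t$, acting on the $x$-variable, by summing against $P_x^\omega(X_t=z)$. The factor $g_s(y,y,\omega,i)$ is independent of $z$, and the two indicators give $P_x^\omega(X_t=y)-P_x^\omega(X_t=y+e_i)=-D_i P_x^\omega(X_t=y)$, which produces the claimed identity after moving the minus sign out of the sum.

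The only delicate point is the bookkeeping in the second step: one must recognise that the two surviving contributions are two halves of the discrete gradient $D_i$ along the edge $\{y,y+e_i\}$, and check both the sign (which comes from $D_{-j}=-D_j$ shifted by $e_j$) and the matching of conductances ($\omega_{y+e_j,y}=\omega_{y,y+e_j}$). Everything else is a direct expansion, so no analytic input beyond linearity of $\mathbb{E}^{(y)}$ and of $P_t$ is needed.
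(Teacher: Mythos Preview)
Your proof is correct and follows essentially the same route as the paper: expand the generator, observe that the commutator $\mathbb{E}^{(y)}\mathcal{L}-\mathcal{L}\mathbb{E}^{(y)}$ kills every summand whose conductance lies outside $a(y)$, identify the two surviving contributions at $z=y$ and $z=y+e_i$ (with the sign coming from $D_{-j}f_s(y+e_j,\cdot)=-D_jf_s(y,\cdot)$), and then sum against the transition kernel. The only cosmetic difference is that the paper indexes the generator over $i=1,\dots,d$ with separate forward and backward terms, whereas you index over $i\in\{\pm1,\dots,\pm d\}$; the bookkeeping and the resulting indicator formula for $h_s$ are identical.
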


\begin{proof} Recall the definition of $D_{i}$. On the one hand,
by definition, we have 
\[
\mathbb{E}^{(y)}\left[\mathcal{L}f_{s}\left(x,\omega\right)\right]=\sum_{i=1}^{d}\mathbb{E}^{(y)}\left[\omega_{x,x+e_{i}}\left(D_{i}f_{s}\left(y,\omega\right)\right)\right]-\mathbb{E}^{(y)}\left[\omega_{x,x-e_{i}}\left(D_{i}f_{s}\left(x-e_{i},\omega\right)\right)\right] \notag
\]
and 
\[
\mathcal{L}\mathbb{E}^{(y)}\left[f_{s}\left(x,\omega\right)\right]=\sum_{i=1}^{d}\omega_{x,x+e_{i}}\mathbb{E}^{(y)}\left[\left(D_{i}f_{s}\left(x,\omega\right)\right)\right]-\omega_{x,x-e_{i}}\mathbb{E}^{(y)}\left[\left(D_{i}f_{s}\left(x-e_{i},\omega\right)\right)\right]. \notag
\]
On the other hand, $\omega_{x,x+e_{i}}$ is $\bar{a}\left(y\right)$-measurable
iff $x\neq y$ and $\omega_{x,x-e_{i}}$ is $\bar{a}\left(y\right)$-measurable
iff $x-e_{i}\neq y$. Therefore, 
\[
h_{s}\left(x,y,\omega\right)=\begin{cases}
\sum_{i=1}^{d}g_{s}\left(y,y,\omega,i\right) & \mbox{if }x=y\\
-g_{s}\left(y,y,\omega,i\right) & \mbox{if }x-e_{i}=y,\; i=1,\dots,d\\
0 & \mbox{otherwise}. \notag
\end{cases}
\]
It finally follows that 
\begin{eqnarray*}
P_{t}h_{s}\left(x,y,\omega\right) & = & \sum_{z\in\mathbb{Z}^{d}}P_{x}\left(X_{t}=z\right)h_{s}\left(z,y,\omega\right)\\
 & = & P_{x}\left(X_{t}=y\right)h_{s}\left(y,y,\omega\right)-\sum_{i=1}^{d}P_{x}\left(X_{t}=y+e_{i}\right)h_{s}\left(y+e_{i},y,\omega\right)\\
 & = & \sum_{i=1}^{d}\left(P_{x}\left(X_{t}=y\right)-P_{x}\left(X_{t}=y+e_{i}\right)\right)g_{s}\left(y,y,\omega,i\right)
\end{eqnarray*}
which is the desired result. \end{proof}


\begin{lem} \label{lem:Lemmeclef} Let $f\colon\mathbb{Z}^{d}\times\Omega\to\mathbb{R}$.
Then, for all $s>0$ it holds 
\begin{align*}
\sum_{i=1}^{d}\mathbb{E}\left[\left(\mathbb{E}^{(0)}\left[\omega_{0,e_{i}}D_{i}f_{s}\left(0,\omega\right)\right]\right)^{2}\right] & \leqslant-\mathbb{E}\left[\omega_{0,e_{1}}^{2}\right]\partial_{s}\mathbb{E}\left[f_{s}\left(0,\omega\right)^{2}\right],\\
\sum_{i=1}^{d}\mathbb{E}\left[\left(\omega_{0,e_{i}}\mathbb{E}^{(0)}\left[D_{i}f_{s}\left(0,\omega\right)\right]\right)^{2}\right] & \leqslant-\mathbb{E}\left[\omega_{0,e_{1}}^{2}\right]\partial_{s}\mathbb{E}\left[f_{s}\left(0,\omega\right)^{2}\right].
\end{align*}
\end{lem}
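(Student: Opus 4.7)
The plan rests on three observations. First, the reversibility identity invoked at the end of the proof of Theorem~\ref{thm:MainTheorem}, namely
\[
-\partial_s \mathbb{E}\bigl[f_s(0,\omega)^2\bigr] = \sum_{i=1}^d \mathbb{E}\bigl[\omega_{0,e_i}(D_i f_s(0,\omega))^2\bigr].
\]
Second, the ellipticity hypothesis $\omega\ge 1$, which yields the pointwise bound $(D_i f_s)^2 \le \omega_{0,e_i}(D_i f_s)^2$ and hence $\sum_i \mathbb{E}[(D_i f_s)^2] \le -\partial_s \mathbb{E}[f_s^2]$. Third, the fact that $\omega_{0,e_i}$ is one of the conductances comprising $a(0)$, so it is independent of $\overline{a}(0)$; in particular $\mathbb{E}^{(0)}[\omega_{0,e_i}^k] = \mathbb{E}[\omega_{0,e_1}^k]$ is a deterministic constant for $k=1,2$.

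For the first inequality I would apply the conditional Cauchy--Schwarz inequality under $\mathbb{E}^{(0)}$:
\[
\bigl(\mathbb{E}^{(0)}[\omega_{0,e_i}\,D_i f_s(0,\omega)]\bigr)^2 \le \mathbb{E}^{(0)}[\omega_{0,e_i}^2]\,\mathbb{E}^{(0)}[(D_i f_s(0,\omega))^2] = \mathbb{E}[\omega_{0,e_1}^2]\,\mathbb{E}^{(0)}\bigl[(D_i f_s(0,\omega))^2\bigr].
\]
Taking the outer expectation collapses the inner $\mathbb{E}^{(0)}$ by the tower property; summing over $i=1,\dots,d$ and then chaining the ellipticity bound and the reversibility identity yields exactly $-\mathbb{E}[\omega_{0,e_1}^2]\,\partial_s\mathbb{E}[f_s^2]$, as required.

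For the second inequality I would observe that $(\mathbb{E}^{(0)}[D_i f_s(0,\omega)])^2$ is $\overline{a}(0)$-measurable while $\omega_{0,e_i}^2$ depends only on $a(0)$, so by independence
\[
\mathbb{E}\bigl[\omega_{0,e_i}^2\,(\mathbb{E}^{(0)}[D_i f_s(0,\omega)])^2\bigr] = \mathbb{E}[\omega_{0,e_1}^2]\,\mathbb{E}\bigl[(\mathbb{E}^{(0)}[D_i f_s(0,\omega)])^2\bigr].
\]
Conditional Jensen then bounds the remaining factor by $\mathbb{E}[(D_i f_s)^2]$, and the same ellipticity + reversibility step finishes the argument. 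There is no real obstacle in either half: both reduce to a single conditional Cauchy--Schwarz (respectively Jensen) step, the only substantive point being to exploit $\omega_{0,e_i}\in a(0)$ so as to pull its second moment out of $\mathbb{E}^{(0)}$ as a constant, and the ellipticity $\omega\ge 1$ so as to absorb the unweighted gradient energy into the conductance-weighted Dirichlet form $-\partial_s\mathbb{E}[f_s^2]$.
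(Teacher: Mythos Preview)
Your argument is correct and follows the same outline as the paper's: a conditional Cauchy--Schwarz (resp.\ Jensen) step, independence of $\omega_{0,e_i}$ from $\overline{a}(0)$ to pull out the moment of the conductance, ellipticity $\omega\ge 1$, and the reversibility identity for $\partial_s\mathbb{E}[f_s^2]$. The second inequality is handled identically to the paper.

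For the first inequality there is a small but noteworthy difference in the Cauchy--Schwarz split. You use the plain form
\[
\bigl(\mathbb{E}^{(0)}[\omega_{0,e_i}D_if_s]\bigr)^2 \le \mathbb{E}^{(0)}[\omega_{0,e_i}^2]\,\mathbb{E}^{(0)}[(D_if_s)^2]
\]
and then invoke ellipticity to reinsert the conductance weight into the Dirichlet form. The paper instead writes $\omega_{0,e_i}D_if_s = \sqrt{\omega_{0,e_i}}\cdot\sqrt{\omega_{0,e_i}}\,D_if_s$ and applies Cauchy--Schwarz to get
\[
\bigl(\mathbb{E}^{(0)}[\omega_{0,e_i}D_if_s]\bigr)^2 \le \mathbb{E}^{(0)}[\omega_{0,e_i}]\,\mathbb{E}^{(0)}\bigl[\omega_{0,e_i}(D_if_s)^2\bigr],
\]
landing directly on the conductance-weighted gradient and with only the \emph{first} moment $\mathbb{E}[\omega_{0,e_1}]$ in front; this is precisely the sharper bound recorded in the Remark following the lemma. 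Your route bypasses that refinement and proves the lemma exactly as stated, at the cost of one extra use of $\omega\ge 1$. One minor caveat: the Dirichlet identity in the paper reads $-\partial_s\mathbb{E}[f_s^2]=\sum_{i=-d}^{d}\mathbb{E}[\omega_{0,e_i}(D_if_s)^2]$, i.e.\ the sum runs over all $2d$ directions, so your stated identity with $i=1,\dots,d$ is off by a factor of two; this is harmless since you only need the one-sided sum as a lower bound.
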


\begin{rem} The proof actually leads to a better bound in the first
inequality above. Indeed, one can replace the second moment of the
conductance by the first moment. 
This refinement will anyway not be useful for our purpose. \end{rem}

\begin{proof}{[}Proof of Lemma \ref{lem:Lemmeclef}{]} Using Cauchy-Schwarz'
inequality and the fact that $\omega_{0,e_{i}}$ is $a\left(0\right)$-measurable,
we have 
\begin{align*}
\mathbb{E}\left[\left(\mathbb{E}^{(0)}\left[\omega_{0,e_{i}}D_{i}f_{s}\left(0,\omega\right)\right]\right)^{2}\right] & \leqslant\mathbb{E}\left[\mathbb{E}^{(0)}\left[\omega_{0,e_{i}}\right]\mathbb{E}^{(0)}\left[\omega_{0,e_{i}}\left(D_{i}f_{s}\left(0,\omega\right)\right)^{2}\right]\right]\\
 & =\mathbb{E}\left[\omega_{0,e_{i}}\right]\mathbb{E}\left[\omega_{0,e_{i}}\left(D_{i}f_{s}\left(0,\omega\right)\right)^{2}\right].
\end{align*}
Using that $\partial_{t}\mathbb{E}\left[f_{t}^{2}\right]=\partial_{t}\mathbb{E}\left[f_{t}(0,\omega)^{2}\right]=-\sum_{i=-d}^{d}\mathbb{E}\left[\omega_{0,e_{i}}\left(D_{i}f_{s}\left(0,\omega\right)\right)^{2}\right]$
(a classical consequence of the reversibility) and summing over $i=1,\dots,d$,
we get 
\[
\sum_{i=1}^{d}\mathbb{E}\left[\left(\mathbb{E}^{(0)}\left[\omega_{0,e_{i}}D_{i}f_{s}\left(0,\omega\right)\right]\right)^{2}\right]\leqslant-\mathbb{E}\left[\omega_{0,e_{1}}\right]\partial_{s}\mathbb{E}\left[f_{s}\left(0,\omega\right)^{2}\right]\notag 
\]
which leads to the first inequality since $\omega_{0,e_{1}}\geq1$.

For the second inequality by conditioning and Jensen's inequality, we have 
\begin{align*}
\mathbb{E}\left[\left(\omega_{0,e_{i}}\mathbb{E}^{(0)}\left[D_{i}f_{s}\left(0,\omega\right)\right]\right)^{2}\right] & =\mathbb{E}\left[\mathbb{E}^{(0)}\left[\omega_{0,e_{i}}^{2}\right]\left(\mathbb{E}^{(0)}\left[D_{i}f_{s}\left(0,\omega\right)\right]\right)^{2}\right]\\
 & =\mathbb{E}\left[\omega_{0,e_{i}}^{2}\right]\mathbb{E}\left[D_{i}f_{s}\left(0,\omega\right)^{2}\right].
\end{align*}
Now, since $\omega_{0,e_{1}}\geq1$, one has $\sum_{i=1}^{d}\mathbb{E}\left[D_{i}f_{s}\left(0,\omega\right)^{2}\right]\leq\sum_{i=1}^{d}\mathbb{E}\left[\omega_{0,e_{1}}D_{i}f_{s}\left(0,\omega\right)^{2}\right]=-\partial_{s}\mathbb{E}\left[f_{s}(0,\omega)^{2}\right]$
which leads to the desired result and ends the proof of the lemma.
\end{proof}

The last lemma is related to Lemma 15 of \cite{gloria}. However,
due to our specific setting, considering $p=1/2$, its proof is a
bit simpler. We give it for completeness.


\begin{lem}{[}\cite{gloria}{]} \label{lem:Lemme15bis} Let $\alpha>1/2$
and $a\colon\mathbb{R}^{+}\to\mathbb{R}^{+}$ be a $\mathcal{C}^{1}$
non-increasing function. Let $b(t)=\sqrt{-2a'(t)a(t)}$, $t\geq0$.
Assume that 
\begin{align*}
a\left(t\right) & \leqslant C\left(\left(t+1\right)^{-\alpha}+\int_{0}^{t}\left(t-s+1\right)^{-\alpha}b\left(s\right)ds\right)\qquad\forall t\geq0
\end{align*}
for some constant $C$. Then there exists a constant $C_{\alpha}$
that depends only on $\alpha$ such that $a\left(t\right)\leqslant C_{\alpha}\max(C,a(0))\left(t+1\right)^{-\alpha}$.
\end{lem}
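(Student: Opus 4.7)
My plan is to run a bootstrap/continuity argument on the non-decreasing quantity $\Phi(T) := \sup_{0 \le s \le T}(s+1)^\alpha a(s)$. After the scaling $a \mapsto a/\max(C,a(0))$ we may reduce to $\max(C,a(0)) = 1$, so the claim becomes a uniform bound $\Phi(T) \le C_\alpha$ with $C_\alpha$ depending only on $\alpha$; translating back yields the stated conclusion. To derive a recursive estimate on $\Phi$, I apply the hypothesis at a generic time $T$, split $\int_0^T (T-s+1)^{-\alpha} b(s)\,ds = I_1 + I_2$ at $s = T/2$, and treat each piece by a Cauchy--Schwarz inequality engineered to exploit the identity $b^2 = -(a^2)'$.

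For the near piece $I_2 = \int_{T/2}^T$, the identity gives $\int_{T/2}^T b^2\,ds = a(T/2)^2 - a(T)^2 \le \Phi(T/2)^2(T/2+1)^{-2\alpha}$, which combined with $\int_{T/2}^T (T-s+1)^{-2\alpha}\,ds \le 1/(2\alpha-1)$ (where $\alpha > 1/2$ is crucial) and Cauchy--Schwarz yields $I_2 \le c_\alpha \Phi(T/2)(T+1)^{-\alpha}$. For the far piece $I_1 = \int_0^{T/2}$, the bound $(T-s+1)^{-\alpha} \le 2^\alpha(T+1)^{-\alpha}$ pulls the decay outside, reducing matters to estimating $\int_0^{T/2} b(s)\,ds$. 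I apply a weighted Cauchy--Schwarz with weight $(s+1)^\mu$ for $\mu \in (1,2\alpha)$, a range that is nonempty precisely because $\alpha > 1/2$: the factor $\int_0^{T/2}(s+1)^{-\mu}\,ds$ is controlled by $1/(\mu-1)$, while the factor $\int_0^{T/2} b^2(s+1)^\mu\,ds$ is handled by integrating by parts on $b^2 = -(a^2)'$ and invoking the bootstrap $a(s)^2 \le \Phi(T/2)^2(s+1)^{-2\alpha}$ to bound $\int_0^{T/2} a^2(s+1)^{\mu-1}\,ds \le \Phi(T/2)^2/(2\alpha-\mu)$. An optimal choice such as $\mu = \sqrt{2\alpha}$ then delivers $\int_0^{T/2} b\,ds \le c_\alpha(1+\Phi(T/2))$, hence $I_1 \le c_\alpha(1+\Phi(T/2))(T+1)^{-\alpha}$.

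Combining these estimates yields a recursive inequality of the form $\Phi(T) \le c_0 + c_1\,\Phi(T/2)$ with constants depending only on $\alpha$. The main obstacle is the quantitative bookkeeping needed to close the recursion: once $c_1 < 1$ is secured, iterating down to $T/2^n \approx 1$ (where $\Phi$ is bounded by the trivial estimate $\Phi(2) \le 2^\alpha$ from $a(0) \le 1$) produces the uniform bound $\Phi(T) \le c_0/(1-c_1)$, from which the lemma follows. Obtaining $c_1 < 1$ may require refining the split point from $T/2$ to $T(1-\delta)$ with $\delta$ small (so the $I_2$ constant tends to $1/\sqrt{2\alpha-1}$) combined with the optimization of $\mu$ above. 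This tracking of constants, rather than the qualitative structure of the estimate, is the real technical heart of the argument.
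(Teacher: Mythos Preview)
Your bootstrap outline has a genuine gap in the closing step, and the fix you sketch does not work for $\alpha\in(1/2,1]$---which includes the values $\alpha=d/4$ with $d=3,4$ needed in the paper. The Cauchy--Schwarz bound on the near piece gives
\[
I_2\le\Bigl(\int_{T(1-\delta)}^T(T-s+1)^{-2\alpha}\,ds\Bigr)^{1/2}\Bigl(\int_{T(1-\delta)}^T b^2\Bigr)^{1/2}\le\frac{1}{\sqrt{2\alpha-1}}\,a\bigl(T(1-\delta)\bigr),\notag
\]
so the contribution of $I_2$ to your constant $c_1$ is at least $1/\sqrt{2\alpha-1}$ no matter how small $\delta$ is (you say this yourself). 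But $1/\sqrt{2\alpha-1}\ge 1$ exactly when $\alpha\le 1$, and the $I_1$ contribution is nonnegative; optimising $\mu$ only touches $I_1$. Hence for $\alpha\le 1$ you cannot make $c_1<1$, and iterating $\Phi(T)\le c_0+c_1\Phi(T/2)$ down to scale $1$ yields a bound of order $c_1^{\log_2 T}=T^{\log_2 c_1}$, which diverges.

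What the paper does differently is to insert an averaging step before applying the hypothesis: from monotonicity, $a(t)\le\frac{2}{t}\int_{t/2}^t a(u)\,du$, and the hypothesis is applied to each $a(u)$. This introduces an extra integration in $u$, and after Fubini the near piece (the paper's term $III$) acquires the factor $\frac{2}{t}\int_0^t(t'+1)^{-\alpha}\,dt'$ in place of a bare $(t-s+1)^{-\alpha}$. Combined with the dyadic estimate $\int_{t/4}^t b\lesssim\Lambda(t)\,t^{1/2-\alpha}$, the resulting coefficient of $\Lambda(t)=\sup_s(s+1)^\alpha a(s)$ is of order $t^{1/2-\alpha}$, which tends to $0$ for \emph{every} $\alpha>1/2$. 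A further three-way split in $s$ with a free parameter $T$ handles the far piece, producing $(1+t)^\alpha a(t)\le A\sqrt{T}\max(C,a(0))+A\Lambda(t)\bigl(T^{1/2-\alpha}+o_t(1)\bigr)$; now one chooses $T$ and then $t_0$ large to make the coefficient of $\Lambda(t)$ at most $1/2$, closing the loop. The averaging trick is the missing idea in your argument.
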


\begin{proof} Throughout the proof we use that $u\lesssim v$ if
there exists a constant $A$ that depends only on $\alpha$ such that
$u\leq Av$. The expected result will follow from the fact that, for
some $t_{o}>0$ that will be chosen later on, $[t_{o},\infty)\ni t\mapsto\Lambda\left(t\right):=\sup_{t_{o}\leq s\leqslant t}\left(s+1\right)^{\alpha}a\left(s\right)$
is a bounded function, bounded by $C'\max(C,a(0))$ for some $C'$
depending only on $\alpha$. Indeed, since $a$ is non-increasing
$a(s)\leq a(0)$ for any $s\in[0,t_{o}]$ which, together with the
bound on $\Lambda$ would lead to the desired conclusion.

Our starting point is the following inequality obtained using that
$a$ is non-increasing. 
\begin{align}
a\left(t\right) & \leqslant\frac{2}{t}\int_{\frac{t}{2}}^{t}a\left(u\right)du\leqslant\frac{2C}{t}\int_{\frac{t}{2}}^{t}\frac{du}{\left(u+1\right)^{\alpha}}+\frac{2}{t}\int_{\frac{t}{2}}^{t}\int_{0}^{u}\frac{b(s)dsdu}{\left(u+1-s\right)^{\alpha}}\leq\frac{C}{(\frac{t}{2}+1)^{\alpha}}\nonumber \\
 & +\frac{2}{t}\int_{\frac{t}{2}}^{t}\int_{0}^{T}\frac{b(s)dsdu}{(u+1-s)^{\alpha}}+\frac{2}{t}\int_{\frac{t}{2}}^{t}\int_{T}^{u/2}\frac{b(s)dsdu}{(u+1-s)^{\alpha}}+\frac{2}{t}\int_{\frac{t}{2}}^{t}\int_{u/2}^{u}\frac{b(s)dsdu}{(u+1-s)^{\alpha}}\nonumber \\
 & =\frac{C}{(\frac{t}{2}+1)^{\alpha}}+I+II+III \label{start26}
\end{align}
where $T\in[1,t/4]$ is a parameter that will be chosen later on.
In order to bound $I,II$ and $III$, we will repeatedly use the following
bounds, that holds for all $T_{1}<T_{2}$ and whose proof is given
below 
\begin{equation}
\int_{T_{1}}^{T_{2}}b\left(s\right)ds\lesssim\begin{cases}
\sqrt{T_{2}-T_{1}}a\left(T_{1}\right)\\
\Lambda\left(T_{2}\right)T_{1}^{\frac{1}{2}-\alpha} & \mbox{ if }T_{1}>0.
\end{cases}\label{estimebs}
\end{equation}
To prove the first inequality, we use Cauchy-Schwarz' inequality.
Namely 
\begin{align*}
\frac{1}{T_{2}-T_{1}}\left(\int_{T_{1}}^{T_{2}}b\left(s\right)ds\right)^{2} & \leqslant\int_{T_{1}}^{T_{2}}b^{2}\left(s\right)ds=-\int_{T_{1}}^{T_{2}}\frac{d}{ds}a^{2}\left(s\right)ds\leqslant a\left(T_{1}\right)^{2}.
\end{align*}
To prove the second inequality, we repeatedly use the latter. set
$N=\lceil\log_{2}\left(\nicefrac{T_{2}}{T_{1}}\right)\rceil$, then we have 
\begin{align*}
\int_{T_{1}}^{T_{2}}b\left(s\right)ds & =\sum_{n=0}^{N-1}\int_{2^{n}T_{1}}^{2^{n+1}T_{1}}b\left(s\right)ds\leqslant\sum_{n=0}^{N-1}\sqrt{2^{n}T_{1}}a\left(2^{n}T_{1}\right)\leqslant\sum_{n=0}^{N-1}\left(2^{n}T_{1}\right)^{\frac{1}{2}-\alpha}\Lambda\left(2^{n}T_{1}\right)\\
 & \leqslant\Lambda\left(T_{2}\right)T_{1}^{\frac{1}{2}-\alpha}\sum_{n=0}^{N-1}\left(2^{\frac{1}{2}-\alpha}\right)^{n}\lesssim\Lambda\left(T_{2}\right)T_{1}^{\frac{1}{2}-\alpha}.
\end{align*}

Now, since $T\leq t/4$ and thanks to \eqref{estimebs}, we have 
\[
I:=\frac{2}{t}\int_{\frac{t}{2}}^{t}\int_{0}^{T}\frac{b(s)dsdu}{(u+1-s)^{\alpha}}\leqslant\frac{2}{t}\int_{\frac{t}{2}}^{t}\int_{0}^{T}\frac{b(s)dsdu}{(\frac{t}{4}+1)^{\alpha}}=\frac{\int_{0}^{T}b(s)ds}{(\frac{t}{4}+1)^{\alpha}}\lesssim\frac{\sqrt{T}a(0)}{(t+1)^{\alpha}}.
\notag \]
Again using \eqref{estimebs} we have 
\[
II:=\frac{2}{t}\int_{\frac{t}{2}}^{t}\int_{T}^{u/2}\frac{b(s)dsdu}{(u+1-s)^{\alpha}}\leq\frac{2}{t}\int_{\frac{t}{2}}^{t}\frac{\int_{T}^{u/2}b(s)ds}{(\frac{t}{4}+1)^{\alpha}}du\leq\frac{\frac{2}{t}\int_{\frac{t}{2}}^{t}\Lambda(\frac{u}{2})T^{\frac{1}{2}-\alpha}du}{(\frac{t}{4}+1)^{\alpha}}\lesssim\frac{\Lambda(t)T^{\frac{1}{2}-\alpha}}{(t+1)^{\alpha}}.
\notag \]
In order to bound the third term,
which is more intricate, we first use the Fubini Theorem, noting that $\mathds{1}_{ t/2 \leqslant u \leqslant t } \mathds{1}_{ u/2 \leqslant s \leqslant u} \leqslant \mathds{1}_{t/4\leqslant s \leqslant t} \mathds{1}_{s\leqslant u \leqslant t}$, 
and that $\alpha > 1/2$ to get 
\begin{eqnarray}
III & \leqslant & \frac{2}{t}\int_{\frac{t}{4}}^{t}\int_{s}^{t}\left(u+1-s\right)^{-\alpha}dub\left(s\right)ds =  \frac{2}{t}\int_{\frac{t}{4}}^{t}\int_{0}^{t-s}\left(t'+1\right)^{-\alpha}dt'b\left(s\right)ds\notag\\
 & \leqslant & \frac{2}{t}\int_{0}^{t}\left(t'+1\right)^{-\alpha}dt'\times\int_{\frac{t}{4}}^{t}b\left(s\right)ds \lesssim (t+1)^{-\frac{1}{2}-\alpha}\Lambda\left(t\right)\notag
\end{eqnarray}
Therefore, plugging the previous bounds on $I,II$
and $III$ into \ref{start26}, it holds 
\[
(1+t)^{\alpha}a(t)\leq A\sqrt{T}\max(C,a(0))+A\Lambda(t)\left(T^{\frac{1}{2}-\alpha}+\frac{1}{\sqrt{t}}\right)
\notag \]
for some constant $A$ that depends only on $\alpha$. Now, since
$\alpha>1/2$, there exist $t_{o}\geq4$ and $T\geq1$ such that for
all $t\geq t_{o}$, $1/\sqrt{t}\leq1/(4A)$ and $T^{\frac{1}{2}-\alpha}\leq1/(4A)$
so that, taking the supremum and using the monotonicity of $\Lambda$,
it holds $\Lambda(t)\leq A\sqrt{T}\max(C,a(0))+\frac{1}{2}\Lambda(t)$
for all $t\geq t_{o}$ which leads to the desired conclusion. The
proof of the lemma is complete. \end{proof}

\section{Additional remarks}

\subsection{Completely monotonic functions}

In this section, we prove some results on $P_{x}^{m}\left(X_{t}=y\right)$,
for a given walk scheme $m$, using the notion of completely monotonic
functions. Recall that a function $f\colon(0,\infty)\to\mathbb{R}$
is said to be completely monotonic if it possesses derivatives $f^{(n)}$
of all orders and if $(-1)^{n}f^{(n)}(x)\geq0$ for all $x>0$ and
all $n=0,1,2,\dots$ (see \textit{e.g.} \cite{feller,miller}).

\begin{prop} \label{cor:CorollaireDeriveeCM} Let $C,\alpha>0$.
Assume that $f\colon(0,\infty)\to\mathbb{R}$ is a completely monotonic
function satisfying for all $t>0$, $f(t)\leqslant\frac{C}{t^{\alpha}}$.
Then, for all $t>0$, it holds $-f'(t)\leq\frac{C'}{t^{\alpha+1}}$
for some constant $C'$ that depends only on $C$ and $\alpha$. \end{prop}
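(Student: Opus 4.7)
The plan is to exploit that complete monotonicity of $f$ forces $-f'$ to itself be a nonnegative, non-increasing function, so that a one-step mean-value argument converts a pointwise bound on $f$ into a pointwise bound on $-f'$.

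First I would record the monotonicity consequences of the hypothesis: since $(-1)^n f^{(n)}\geq 0$ for every $n\geq 0$, the function $f$ is nonnegative (case $n=0$), while $-f'\geq 0$ (case $n=1$) and $(-f')'=-f''\leq 0$ (case $n=2$). In other words, $-f'$ is a nonnegative, non-increasing function on $(0,\infty)$. Equivalently, one could simply note that $-f'$ is itself completely monotonic.

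Next, for any $t>0$ I would write
\[
\int_{t}^{2t}(-f'(u))\,du \;=\; f(t)-f(2t) \;\leqslant\; f(t) \;\leqslant\; \frac{C}{t^{\alpha}},
\]
where the first inequality uses $f(2t)\geq 0$. Since $-f'$ is non-increasing, $(-f')(u)\geq (-f')(2t)$ for every $u\in[t,2t]$, and the integral on the left is bounded below by $t\cdot(-f'(2t))$. Combining,
\[
t\cdot(-f'(2t)) \;\leqslant\; \frac{C}{t^{\alpha}},
\qquad\text{i.e.,}\qquad -f'(2t) \;\leqslant\; \frac{C}{t^{\alpha+1}}.
\]
A change of variables $s=2t$ then yields $-f'(s)\leq 2^{\alpha+1}C\,s^{-(\alpha+1)}$, which is the desired conclusion with $C'=2^{\alpha+1}C$.

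There is no real obstacle beyond unpacking the definition of complete monotonicity: the only nontrivial ingredient is recognizing that $-f'$ is monotone decreasing, which lets the crude bound $f(t)-f(2t)\leq f(t)$ together with a one-sided Riemann estimate do all the work. One could also optimize the dilation factor (replacing $2$ by $\lambda=(\alpha+1)/\alpha$) to improve the constant, but this is unnecessary for the qualitative statement.
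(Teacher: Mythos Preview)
Your proof is correct and considerably simpler than the paper's. The paper invokes Bernstein's theorem to write $f(t)=\mathbb{E}[e^{-tX}]$ for a nonnegative random variable $X$, then converts the bound $f(t)\leqslant C t^{-\alpha}$ into a tail estimate $\mathbb{P}(X\leqslant x)\lesssim x^{\alpha}$ via a Chernoff argument, and finally computes $-f'(t)=\mathbb{E}[Xe^{-tX}]$ by Fubini and integrates against that tail. You bypass all of this by observing that only the first three sign conditions in the definition of complete monotonicity are needed: $f\geqslant 0$, $-f'\geqslant 0$, and $-f'$ non-increasing suffice for the mean-value estimate $t\,(-f'(2t))\leqslant f(t)-f(2t)\leqslant f(t)$. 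The paper's route has the merit of making the spectral picture explicit and is perhaps more natural if one wants to pass to general decay profiles (as the remark following the proposition suggests), but for the statement as written your argument is shorter, uses strictly less about $f$, and even yields an explicit constant $C'=2^{\alpha+1}C$.
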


\begin{rem} At the price of some technicalities, the above result
can be extended to more general decay (\textit{i.e.} replacing $1/t^{\alpha}$
by some general completely monotonic function $g$ with $\lim_{\infty}g=0$).
\end{rem}

\begin{proof} Without loss of generality assume that $f\left(0\right)=1$.
It is well known (see \cite{feller,widder}) that $f$ is the Laplace
transform of a positive random variable $X$, namely that, for all
$t>0$, $f\left(t\right)=\mathbb{E}\left[e^{-tX}\right]$ where $\mathbb{E}$
denotes the mean of the law of $X$. Using the Markov Inequality we
get for all $\lambda>0$ and all $x>0$ 
\[
\mathbb{P}\left(X\leqslant x\right)\leqslant\mathbb{E}\left[e^{-\lambda X}\right]e^{\lambda x}\leqslant C\exp\left(\lambda x-\alpha\log\lambda\right).
\notag \]
Optimizing over the $\lambda>0$ we get (since $\inf_{\lambda>0}\{\lambda x-\alpha\log\lambda\}=\alpha-\alpha\log\alpha+\alpha\log x$
(the minimum being reached at $\lambda=\alpha/x$)) $\mathbb{P}\left(X\leqslant x\right)\leqslant Ce^{\alpha}\left(\frac{x}{\alpha}\right)^{\alpha}$.
Therefore, using Fubini's theorem, for all $t>0$ it holds 
\begin{align*}
-f'(t) & =\mathbb{E}\left[Xe^{-tX}\right]=\mathbb{E}\left[X\int_{X}^{\infty}te^{-tx}dx\right]=\int_{0}^{\infty}te^{-tx}\mathbb{E}\left[X\mathds{1}_{X\leq x}\right]dx\\
 & \leq\int_{0}^{\infty}txe^{-tx}\mathbb{P}(X\leq x)dx\leq\frac{C_{\alpha}}{t^{\alpha}}\int_{0}^{\infty}(tx)^{\alpha+1}e^{-tx}dx=\frac{C_{\alpha}\Gamma(\alpha+2)}{t^{\alpha+1}}
\end{align*}
which ends the proof. \end{proof}

\begin{cor} \label{pro:DecroissanceProbaMarche2} There exists a
constant $C$ such that for all well-defined walk scheme $m$
and all $t>0$ it holds $\sum_{y\in\mathbb{Z}^{d},\left|z\right|=1}\left(P_{0}^{m}\left(X_{t}=y\right)-P_{0}^{m}\left(X_{t}=y+z\right)\right)^{2}\leqslant C/\left(t+1\right)^{\frac{d}{2}+1}$.
\end{cor}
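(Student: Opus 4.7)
The plan is to reduce the squared gradient sum to a time derivative of the return probability, then invoke Proposition \ref{cor:CorollaireDeriveeCM}. Write $u_t(y) := P_0^m(X_t = y)$. Since $m_{y,y+z} \geq 1$, the standard Dirichlet form identity for the reversible semigroup $(e^{t\mathcal{L}^m})_{t\geq 0}$ yields
\[
\sum_{y,|z|=1}\bigl(u_t(y+z)-u_t(y)\bigr)^2 \leq \sum_{y,|z|=1} m_{y,y+z}\bigl(u_t(y+z)-u_t(y)\bigr)^2 = -\partial_t \sum_{y\in\mathbb{Z}^d} u_t(y)^2. \notag
\]
By reversibility and Chapman--Kolmogorov, $\sum_y u_t(y)^2 = \sum_y P_0^m(X_t=y) P_y^m(X_t=0) = P_0^m(X_{2t}=0)$. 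Setting $F(s) := P_0^m(X_s=0)$, the above inequality becomes $\sum_{y,|z|=1}(u_t(y+z)-u_t(y))^2 \leq -2 F'(2t)$.

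Next, $F$ is completely monotonic: since $\mathcal{L}^m$ is selfadjoint and nonpositive on $\ell^2(\mathbb{Z}^d)$, the spectral theorem gives $F(s) = \langle \delta_0, e^{s\mathcal{L}^m}\delta_0\rangle = \int_0^\infty e^{-s\lambda}\,d\nu(\lambda)$ for some finite nonnegative spectral measure $\nu$, so $F$ is a Laplace transform of a positive measure. Lemma \ref{pro:DecroissanceProbaMarche} delivers $F(2t) \leq C(t+1)^{-d/2}$, which in particular gives $F(s) \leq C'/s^{d/2}$ for every $s>0$. Proposition \ref{cor:CorollaireDeriveeCM} then yields $-F'(s) \leq C''/s^{d/2+1}$, whence $-2F'(2t) \leq C'''/t^{d/2+1}$. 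This already produces the desired bound in the regime $t \geq 1$, where $t^{-d/2-1}$ and $(t+1)^{-d/2-1}$ are comparable.

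For the range $t \leq 1$ I would use the crude estimate
\[
\sum_{y,|z|=1}\bigl(u_t(y+z)-u_t(y)\bigr)^2 \leq 2 \sum_{y,|z|=1}\bigl(u_t(y+z)^2 + u_t(y)^2\bigr) = 8d\sum_{y} u_t(y)^2 \leq 8d, \notag
\]
which follows from $(a-b)^2 \leq 2(a^2+b^2)$ together with $0 \leq u_t(y) \leq 1$ and $\sum_y u_t(y) = 1$. Since $(t+1)^{-d/2-1}$ is bounded below on $[0,1]$, this constant is dominated by a suitable multiple of $(t+1)^{-d/2-1}$. Combining the two regimes yields the corollary. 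The only nontrivial step is the identification of $F$ as completely monotonic, which is where the ``well-defined walk scheme'' hypothesis enters: it guarantees that $\mathcal{L}^m$ generates a genuine selfadjoint Markov semigroup on $\ell^2(\mathbb{Z}^d)$ to which the spectral theorem applies. Everything else reduces to the reversibility identity, Lemma \ref{pro:DecroissanceProbaMarche}, and a direct appeal to Proposition \ref{cor:CorollaireDeriveeCM}.
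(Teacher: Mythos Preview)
Your proof is correct and follows essentially the same route as the paper's: identify the gradient sum with (minus) the time derivative of $\|P_t^m\delta_0\|_2^2$ via the Dirichlet form identity, establish complete monotonicity of this function, feed the heat-kernel bound from Lemma~\ref{pro:DecroissanceProbaMarche} into Proposition~\ref{cor:CorollaireDeriveeCM}, and handle small $t$ by the crude $\ell^2$ bound. The only cosmetic difference is that you invoke the spectral theorem to get complete monotonicity of $s\mapsto P_0^m(X_s=0)$, whereas the paper verifies complete monotonicity of $t\mapsto\|P_t^m f\|_2^2$ by iterating $\partial_t\langle P_t f,P_t f\rangle=2\langle\mathcal{L}P_t f,P_t f\rangle$ and using self-adjointness and nonpositivity of $\mathcal{L}^m$ directly; since $\|P_t^m\delta_0\|_2^2=P_0^m(X_{2t}=0)$ these are the same function up to a time rescaling.
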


\begin{proof} For $p\geq1$, set $\Vert f\Vert_{p}^{p}:=\sum_{x}\vert f\left(x\right)\vert ^{p}$
and, given an operator $P$ acting on functions, $\Vert P\Vert_{p\rightarrow q}=\sup\Vert Pf\Vert_{q}$,
where the supremum is taken over all $f$ with $\Vert f\Vert_{p}=1$.

First, we observe that the quantity is bounded as a consequence of Lemma \ref{pro:DecroissanceProbaMarche}.
As a matter of fact, for all $y\in\mathbb{Z}^{d}$ and $t\geqslant0$,
$\sum_{x}{P_{y}^{m}}\left(X_{t}=x\right)^{2}=P_{y}^{m}\left(X_{t}=y\right)\leqslant1$.

Then, note that $t\mapsto\Vert P_{t}^{m}f\Vert_{2}^{2}$ is a completely monotonic function.
It is due to the fact that $\mathcal{L}^{m}$ is self-adjoint ($<f,\mathcal{L}^{m}g> = <\mathcal{L}^{m}f,g>$)
and defines a negative quadratic form ($<\mathcal{L}^{m}f,f>\leqslant0$).
Indeed, for any function $f$, 
$\partial_{t}<P_{t}^{m}f,P_{t}^{m}f>=2<\mathcal{L}P_{t}^{m}f,P_{t}^{m}f>\leqslant0$,
$\partial_{t}\partial_{t}<P_{t}^{m}f,P_{t}^{m}f>=4<\mathcal{L}^{m}P_{t}^{m}f,\mathcal{L}^{m}P_{t}^{m}f>\geqslant0$,
and because $\mathcal{L}^{m}$ commutes with $P_{t}^{m}$, 
we can conclude repeating these last arguments with the function $\mathcal{L}^{m}f$.
Hence, using Proposition \ref{cor:CorollaireDeriveeCM},
there exists a constant $C'$ that depends only on $d$ and $C$ such
that for all $f\in\ell^{1}$, $\frac{d}{dt}\Vert P_{t}^{m}f\Vert_{2}^{2}\leqslant C'/t^{\frac{d}{2}+1}$.

On the other hand, by definition of $\mathcal{L}^{m}$ and $P_{t}^{m}$,
we have 
\begin{equation}
\frac{d}{dt}\Vert P_{t}^{m}f\Vert_{2}^{2}=-\sum_{x,|z|=1}m_{x,x+z}\left(P_{t}^{m}f\left(x+z\right)-P_{t}^{m}f\left(x\right)\right)^{2}\label{eq:fm2}
\end{equation}
Observe that for $f=\mathds{1}_{\{0\}}$ (the function that equals
1 at $0$ and 0 elsewhere), we have $f\in\ell^{1}$ 
and $P_{t}^{m}f(x)=\sum_{y\in\mathbb{Z}^{d}}P_{x}^{m}(X_{t}=y)f(y)=P_{0}^{m}(X_{t}=x)$
which plugged in $\eqref{eq:fm2}$ gives the desired result. \end{proof}

\subsection{Gloria, Neukamm and Otto with a fixed walk scheme}

Using the monotonic function approach above, we can prove a stronger
decay of the variance of the environment view by the particle, when
the function $f$ is the divergence of an other function, but only
when the walk scheme $m$ is fixed. This is a result (much weaker
but) in the spirit of \cite{gloria}.

\begin{prop} There exists a constant $C>0$ such that for almost all
walk scheme $m$, all $t\geqslant0$ and all function $f=D_{i}g$,
where $-d\leqslant i\leqslant d$, $g$ is local, translation-invariant
and $\mathbb{E}[g^{2}]<\infty$, it holds $\mathbb{E}\left[\left(P_{t}^{m}f\left(0,\omega\right)\right)^{2}\right]\leqslant C\#supp(g)^{2}\frac{\mathbb{E}[g^{2}]}{(t+1)^{\frac{d}{2}+1}}$.
\end{prop}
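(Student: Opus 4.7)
The plan is to move the lattice gradient off of $g$ and onto the heat kernel by a discrete integration by parts, so that the bound reduces to the decay estimate on $\ell^2$-norms of discrete gradients of the transition probability already proved in Corollary \ref{pro:DecroissanceProbaMarche2}. Concretely, writing $f = D_i g$ and unfolding the semigroup, and replacing $g$ by $g - \mathbb{E}[g]$ if needed so that $\mathbb{E}[g] = 0$ (this does not change $D_i g$ and only decreases $\mathbb{E}[g^2]$), I would show
\[
P_t^{m} f(0,\omega) = \sum_y P_0^{m}(X_t = y)\bigl(g(y+e_i,\omega) - g(y,\omega)\bigr) = \sum_y b_y\, g(y,\omega),
\]
where $b_y := P_0^{m}(X_t = y-e_i) - P_0^{m}(X_t = y)$ is deterministic (it depends only on the fixed walk scheme $m$). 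Here the second equality comes from the change of summation variable $y \mapsto y + e_i$ in the first sum; no hypothesis on $g$ is used except summability of the $\ell^2$ pairing, which follows from $g$ being local and $P_0^{m}(X_t=\cdot)$ being a probability distribution.

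Next I would square and take the expectation over $\omega$, obtaining
\[
\mathbb{E}\bigl[(P_t^{m} f(0,\omega))^2\bigr] = \sum_{y,y'} b_y b_{y'}\, \mathbb{E}\bigl[g(y,\omega)g(y',\omega)\bigr].
\]
Because $g$ is translation invariant and local with support $S := \operatorname{supp}(g)$, the random variables $g(y,\cdot)$ and $g(y',\cdot)$ depend on disjoint sets of (independent) conductances whenever $y - y' \notin S - S$, in which case $\mathbb{E}[g(y)g(y')] = \mathbb{E}[g]^2 = 0$. Setting $D := S - S$, this leaves
\[
\mathbb{E}\bigl[(P_t^{m} f(0,\omega))^2\bigr] = \sum_{d\in D} \mathbb{E}[g(0,\omega)g(d,\omega)] \sum_y b_y b_{y+d}.
\]

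Now the inner sum is bounded, by Cauchy--Schwarz in $y$, by $\sum_y b_y^2$, and the covariance term is bounded in absolute value by $\mathbb{E}[g^2]$. Since $|D| \leq \#S\cdot \#S = \#\operatorname{supp}(g)^2$, I get
\[
\mathbb{E}\bigl[(P_t^{m} f(0,\omega))^2\bigr] \leq \#\operatorname{supp}(g)^2\, \mathbb{E}[g^2]\, \sum_{y} b_y^2.
\]
The sum $\sum_y b_y^2 = \sum_y (P_0^{m}(X_t=y) - P_0^{m}(X_t = y - e_i))^2$ is exactly (a summand of) the quantity controlled in Corollary \ref{pro:DecroissanceProbaMarche2}, hence bounded by $C(t+1)^{-d/2-1}$, which yields the claim.

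There is no real obstacle here: all the analytic work is already contained in Corollary \ref{pro:DecroissanceProbaMarche2} (which in turn rests on the completely monotonic functions argument of Proposition \ref{cor:CorollaireDeriveeCM}). The only minor step requiring care is the bookkeeping $|S-S| \leq \#\operatorname{supp}(g)^2$, and the observation that $\sum_y b_y = 0$ (which is why the $\mathbb{E}[g] = 0$ reduction is harmless even if one prefers not to do it).
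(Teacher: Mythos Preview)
Your argument is correct and follows essentially the same route as the paper: both perform the discrete integration by parts to transfer $D_i$ onto the heat kernel, exploit the locality of $g$ to restrict the double sum in $\mathbb{E}[(\sum_y b_y g(y))^2]$ to nearby pairs, and then invoke Corollary \ref{pro:DecroissanceProbaMarche2}. Your write-up is in fact more explicit than the paper's (which compresses the covariance/locality step into the phrase ``Using the Cauchy--Schwarz Inequality''), and your remark about centering $g$ (equivalently, about $\sum_y b_y = 0$) is a useful clarification that the paper leaves implicit.
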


\begin{proof} Using the Cauchy-Schwarz Inequality and that $\mathbb{E}\left[g\left(x,\omega\right)g\left(y,\omega\right)\right]=0$
as soon as $supp\left(g(x,\omega)\right)\cap supp\left(g(y,\omega)\right)=\emptyset$
we have 
\begin{align*}
 & \mathbb{E}\left[\left(P_{t}^{m}f\left(0,\omega\right)\right)^{2}\right]=\mathbb{E}\left[\left(\sum_{x\in\mathbb{Z}^{d}}\left(P_{0}^{m}\left(X_{t}=x-e_{i}\right)-P_{0}^{m}\left(X_{t}=x\right)\right)g\left(x,\omega\right)\right)^{2}\right]\\
 & \leqslant\#supp\left(g\right)^{2}\sum_{x\in\mathbb{Z}^{d}}\left(P_{0}^{m}\left(X_{t}=x\right)-P_{0}^{m}\left(X_{t}=x+e_{i}\right)\right)^{2}\mathbb{E}\left[g^{2}\right]\notag
\end{align*}
which, combined with Corollary \ref{pro:DecroissanceProbaMarche2}
gives the expected result. \end{proof}

\subsection{Discussion about the polynomial decay}
In the introduction, we told that $t^{-d/2}$ is the optimal decay for 
local functions in $L^2$. Indeed, in \cite{DFGW}, they proved that, under the annealed law, 
the walker $X_{t}$ converges to a Brownian motion, suggesting a diffusive behaviour and this
rate of decay as optimal. In \cite{Mo99}, it is suggested that, using spectral theory, we can construct a non-local function $f$ such that $\mathbb{E}[f^{2}_{t}]$ decays as fast or as slow as we want. We note here that one can construct a local function with
a faster decay than $t^{-d/2}$. Indeed, because $\mathbb{E}[f^{2}_{t}]$ is a completely monotonic function, 
it is a consequence of corrolary \ref{cor:CorollaireDeriveeCM}.
For example, consider a function $g\in L^2$ such that $f=\mathcal{L}g$, then
$$\mathbb{E}[f^{2}_{t}]=\mathbb{E}[(\mathcal{L}g_{t})^{2}]=\partial_{t}\partial_{t}\mathbb{E}[g_{t}^{2}]\leqslant C_{g} t^{-d/2-2} $$
Iterating the process, considering $f=\mathcal{L}^{n}g$, we have that $\mathbb{E}[f^{2}_{t}]\leqslant C_{g}t^{-d/2-2n}$.
Note that in this case, $f$ might not be in $L^{2}$, even if $f_{t}$ would.

\subsection*{Acknowledgement} We thank Cyril Roberto and Julien Bureaux.

\bibliographystyle{alpha}
\bibliography{bibtex}

\end{document}